\newtheorem{theorem}{\bf Theorem}[section]
\newtheorem{lemma}[theorem]{\bf Lemma}
\newtheorem{cor}[theorem]{\bf Corollary}
\newtheorem{problem}[theorem]{\bf Problem}
\newtheorem{prop}[theorem]{\bf Proposition}
\theoremstyle{definition}
\newtheorem{nota}[theorem]{\bf Notation}
\newtheorem{remark}[theorem]{\bf Remark}
\newtheorem{defi}[theorem]{\bf Definition}
\newcommand{\cC}{\mathcal{C}}
\newcommand{\cS}{\mathcal{S}}
\newcommand{\cT}{\mathcal{T}}
\newcommand{\cH}{\mathcal{H}}
\newcommand{\cX}{\mathcal{X}}
\newcommand{\N}{\mathrm{s}}
\newcommand{\PG}{\mathrm{PG}}
\newcommand{\AG}{\mathrm{AG}}
\newcommand{\cl}{\mathrm{cl}}
\newcommand{\STS}{\mathrm{STS}}
\title{Steiner triple systems and spreading  sets in projective  spaces}
\author{Zoltán Lóránt Nagy\thanks{ELKH--ELTE Geometric and Algebraic Combinatorics Research Group,
  E\"otv\"os Lor\'and University, Budapest, Hungary. The author is supported by the Hungarian Research Grant (NKFI) No. K 120154, 124950, 134953 and by the János Bolyai Scholarship of the Hungarian Academy of Sciences. 	E-mail: {\tt nagyzoli@caesar.elte.hu}}, Levente Szemerédi\thanks{ E\"otv\"os Lor\'and University, Budapest, Hungary. The project was  supported by the European Union, co-financed by the European Social Fund (EFOP-3.6.3-VEKOP-16-2017-00002). E-mail: {\tt szelev@caesar.elte.hu}}}
\date{}
\begin{document}

\maketitle

\begin{abstract}
  We address several extremal problems concerning the spreading property of point sets of Steiner triple systems. This property is closely related to the structure of subsystems, as a set is spreading if and only if there is no proper subsystem which contains it.
  We give  sharp upper bounds on the size of a minimal spreading set in a Steiner triple system and show that if  all the minimal spreading sets are large then the examined triple system must be a projective space. We also show that the size of a minimal spreading set is not an invariant of a Steiner triple system.
\end{abstract}

\section{Introduction}

Let $\mathbb{F}_q$  denote the Galois field with $q=p^h$ elements where $p$ is a prime, $h\geq 1$. $\mathbb{F}_{q}^d$ is the $d$-dimensional vector space over $\mathbb{F}_q$. Together with its subspaces, this structure corresponds naturally to the affine geometry $\AG(d,q)$.
The projective space $\PG(d,q)$ of dimension $d$ over $\mathbb{F}_{q}$ is  defined as the quotient 
$(\mathbb{F}_{q}^{d+1}\setminus \{\bf{0}\})/\sim$, where ${\bf a}= (a_1,\ldots,a_{d+1})\sim {\bf b}= (b_1,\ldots,b_{d+1})$ if there exists $\lambda \in \mathbb{F}_q\setminus\{\bf 0\}$ such that ${\bf a}=\lambda{\bf b}$; see \cite{KSz} for an introduction to  projective spaces over finite fields.

A block design of parameters $2-(v,k,\lambda)$ is an underlying set $V$ of cardinality $v$  together with a family of $k$-uniform subsets  whose members are satisfying the property that every pair of points in $V$ are contained in exactly $\lambda$ subsets. In this paper we deal with the case $\lambda=1$ which means the block design forms a linear space as well. If $k=3$, we get back the concept of the well known Steiner triple systems or $\STS$s in brief. Clearly, $\PG(d,2)$ and  $\AG(d,3)$ provide infinite families of Steiner triple systems. 

A triple system induced by a proper subset $V' \subset V$ consists of those triples whose elements do not contain any element of $V \setminus V'$. A nontrivial Steiner subsystem of $\mathcal{S}$ is an $\STS(n')$ induced by a proper subset $V'\subset V$, with $|V'|=n'>3$. Speaking about a  triple system's subsystem, we always suppose that it is of order greater than $3$. Similarly but not analogously, we  call a subset $V'\subset V$ of the underlying set of a triple system $\mathcal{F}$ \emph{nontrivial} if it has size at least $3$ and it is not an element of the triple system. \\
In contrast with the above mentioned geometries where Steiner subsystems are abundant, most $\STS$s do not have any nontrivial subsystem. In \cite{NagyZ} the first author examined the so called {\em spreading property} of triple systems and defined a certain neighbourhood concept which describes how far a triple system can  be from containing subsystems. This paper is devoted to analyse the other side of the spectrum using these concepts to explore $\STS$s which are relatively close to geometric settings such as affine and projective spaces. Besides, it turns out that related extremal problems are in connection with  geometric objects of independent interest such as saturating sets or covering codes.

\begin{defi}[Closure and spreading property, \cite{NagyZ}]\label{szomszed} Consider a graph $G=G(V,E)$ that admits a triangle decomposition. This decomposition corresponds to a  linear triple system $\mathcal{F}$. For an arbitrary set $V' \subset V$, $N(V')$ denotes the set of  its \emph{neighbours}: $$z\in N(V') \Leftrightarrow z\in V \setminus V' \mbox{ \ and \ } \exists xy \in E(G[V']): \{x,y,z\}\in \mathcal{F}.$$
The \emph{closure} $\cl_{\mathcal{F}}(V')$ of a subset $V'$ w.r.t. a (linear) triple system $\mathcal{F}$ is the smallest set $W \supseteq V'$ for which $|N(W)|=0$ holds. If it does not make any confusion, we omit the index and briefly note it by $\cl(V')$.  Note that the closure uniquely exists for each set $V'$.\\ We call a vertex subset $V'$ \emph{spreading} if  $\cl(V')=V$,
and we also call a (linear) triple system $\mathcal{F}$ \emph{spreading} if $\cl(V')=V$ for every nontrivial subset $V'\subset V$.
\end{defi}

Consequently,   a $\STS(n)$  is subsystem-free if and only if $|N(V')|>0$ holds for all nontrivial subsets $V'$ of  the underlying set  $V$ of the system. Note that  Doyen  used the term  \textit{non-degenerate plane} for $\STS$s with the spreading property  \cite{Doyen, Doyen2}.

A Steiner triple system which is subsystem free or which contains only few subsystems contains  sets $V'$ of only $3$ vertices which spreads, i.e. for which $\cl(V')=V$.
This observation gives raise to the following question.

\begin{problem}\label{minsp}
What is the minimum size of a spreading set which exists in every $\STS(n)$ of order $n$?
\end{problem}

In connection with this problem, we proved the result below which describes the Steiner triple systems attaining the maximum of the minimum size.

\begin{theorem}\label{maxofmin} In any Steiner triple system $\STS(n)$ of order $n>1$, there exists a spreading set $U$ of size $|U|\leq \log_2(n+1)$  and this bound  is best possible for infinitely many values of $n$.
\end{theorem}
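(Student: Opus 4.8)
The plan is to prove the two halves separately: the upper bound $|U|\le \log_2(n+1)$ by a greedy ``doubling'' argument, and the sharpness by exhibiting the binary projective spaces $\PG(d,2)$. The conceptual starting point is that a set $W$ with $|N(W)|=0$ is exactly a (possibly trivial) Steiner subsystem: no triple meets $W$ in precisely two points, so $W$ is closed under the triples it determines. Hence $\cl(V')$ is simply the smallest subsystem containing $V'$, and a set is spreading precisely when it lies in no proper subsystem.

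For the upper bound I would invoke the classical fact that a proper subsystem of an $\STS(m)$ has order at most $(m-1)/2$. Its short proof is worth recording: for a point $p$ outside a subsystem $S$, the map sending $q\in S$ to the third point of the triple through $p$ and $q$ injects $S$ into $V\setminus(S\cup\{p\})$, forcing $|S|\le (m-1)/2$. Now build the spreading set greedily. Start with any two points; their closure is the single triple through them, of size $3$. As long as the current closure $S_i=\cl(U_i)$ is not all of $V$, pick a point outside $S_i$ and adjoin it to $U_i$. Then $S_i$ is a proper subsystem of the strictly larger subsystem $S_{i+1}$, so the bound above yields $|S_{i+1}|\ge 2|S_i|+1$. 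Writing $s_i=|S_i|$ with $s_2=3$, an easy induction gives $s_i\ge 2^{i}-1$, so when the process halts at step $m$ with $s_m=n$ we obtain $2^m\le n+1$, i.e.\ $|U|=m\le\log_2(n+1)$; and $\cl(U)=V$ makes this $U$ spreading.

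For sharpness I would take $n=2^{d+1}-1$ together with the system $\PG(d,2)$, whose triples are its lines (over $\GF(2)$ each line has $q+1=3$ points). Here the combinatorial closure of a set coincides with the projective subspace it generates, since adjoining the third point of a line $\{x,y,z\}$ amounts to adjoining $x+y$ over $\GF(2)$; consequently a spreading set is exactly a generating set. A set of $k$ points spans a subspace with at most $2^{k}-1$ points, so spanning all $2^{d+1}-1$ points forces $k\ge d+1$, while the $d+1$ points coming from a basis of $\GF(2)^{d+1}$ attain this. Thus the minimum spreading set of $\PG(d,2)$ has size exactly $d+1=\log_2(n+1)$, matching the upper bound for the infinitely many orders $n=2^{d+1}-1$.

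The main technical point is the doubling step, which rests entirely on the subsystem-order bound $|S|\le(m-1)/2$; once that is in hand the remaining arithmetic is routine. The sharpness direction is essentially a dimension count, the only thing requiring care being the verification that the combinatorial closure in $\PG(d,2)$ really is the linear span.
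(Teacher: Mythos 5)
Your proof is correct and follows essentially the same route as the paper's: a greedy construction in which each newly adjoined point forces the closure to at least double plus one, giving $|\cl(U_{i+1})|\geq 2|\cl(U_i)|+1$ and hence the $\log_2(n+1)$ bound, with $\PG(d,2)$ witnessing sharpness. The only difference is that you make the argument self-contained by proving the subsystem-order bound $|S|\le(m-1)/2$ directly (the paper cites the easy half of Doyen--Wilson) and by verifying the $\PG(d,2)$ lower bound via a dimension count (the paper defers this to its Theorem on unicity); both additions are sound.
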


\begin{theorem}\label{unicity} Suppose that the smallest size of a spreading set in $\STS(n)$   is $\log_2(n+1)$. Then $\STS(n)$ is a projective space over $\mathbb{F}_2$.
\end{theorem}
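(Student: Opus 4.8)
The plan is to reduce the statement to the classical fact that a Steiner triple system in which every triangle spans a Fano subplane must be a binary projective space, and to show that this Fano condition is forced by the extremal hypothesis through a growth estimate on closures. First, since a spreading set has integer cardinality, the hypothesis forces $k:=\log_2(n+1)\in\mathbb{Z}$, that is $n=2^k-1$. I will use the ``doubling'' estimate underlying Theorem~\ref{maxofmin}: if $W$ is a closed set (a subsystem, a single point, or a line) and $p\notin W$, then $|\cl(W\cup\{p\})|\ge 2|W|+1$. Indeed, sending $w\in W$ to the third point $p\ast w$ of the line through $p$ and $w$ is an injection of $W$ into $V\setminus(W\cup\{p\})$: the image avoids $W$ because $W$ is closed (otherwise $p=w\ast(p\ast w)\in W$), it avoids $p$, and it is injective because the line through $p$ and $p\ast w$ determines $w$. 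Hence $\cl(W\cup\{p\})$ contains $W$, the point $p$, and these $|W|$ further points, giving at least $2|W|+1$ in all.

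The heart of the argument is the claim that, under the hypothesis, every triangle $\{x,y,z\}$ (three non-collinear points) satisfies $|\cl(\{x,y,z\})|=7$, i.e.\ spans a Fano plane. Suppose not. Since $\cl(\{x,y,z\})$ is itself a subsystem containing a triangle, its order lies in $\{7,9,13,\dots\}$, so if it is not $7$ it is at least $9$. Now grow a spreading set starting from this triangle: put $W_1=\{x\}$, $W_2=\cl(\{x,y\})$ (the line $xy$), $W_3=\cl(\{x,y,z\})$, and then repeatedly choose a point $p_{j+1}\notin W_j$ and set $W_{j+1}=\cl(W_j\cup\{p_{j+1}\})$ until $W_m=V$. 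Writing $b_j=|W_j|+1$, the doubling estimate gives $b_{j+1}\ge 2b_j$ at every step, while the failure of the Fano condition gives the improved value $b_3=|\cl(\{x,y,z\})|+1\ge 10$. Therefore $2^k=b_m\ge 2^{\,m-3}b_3\ge 10\cdot 2^{\,m-3}>2^{\,m}$, whence $m\le k-1$. But then $\{x,y,z,p_4,\dots,p_m\}$ is a spreading set of size $m\le k-1<\log_2(n+1)$, contradicting the extremal hypothesis. This counting step is the main obstacle, and it is where the hypothesis really bites: a single ``super-doubling'' step placed at the very beginning ($b_3\ge 10>8$) propagates multiplicatively and saves an entire generator, exactly what minimality of the spreading number at the value $\log_2(n+1)$ forbids.

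With the Fano condition in hand I would finish by the standard coordinatisation. Define a binary operation on $V\cup\{0\}$ by letting $u\ast v$ be the third point of the line $uv$ for $u\neq v$, together with $u\ast u=0$ and $0\ast u=u$. This operation is commutative, every element is an involution, and $0$ is an identity; associativity $(u\ast v)\ast w=u\ast(v\ast w)$ is a three-variable identity that need only be verified inside the subsystem generated by $u,v,w$, which by the Fano condition is $\PG(2,2)\cong(\mathbb{Z}/2)^3$, where it holds. Hence $(V\cup\{0\},\ast)$ is an elementary abelian $2$-group of order $2^k$, and the triples of the system are precisely the sets $\{u,v,u\ast v\}$ of distinct nonzero elements summing to $0$; this is exactly the point-line geometry of $\PG(k-1,2)$. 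Thus $\STS(n)\cong\PG(k-1,2)$, a projective space over $\mathbb{F}_2$, as claimed.
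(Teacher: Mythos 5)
Your proof is correct, and it takes a genuinely different route from the paper's. The paper works at the level of the lattice of closed sets: it shows that intersections of closed sets are closed, that (by induction) every minimal spreading set of a closed set $Y$ has size $\log_2(|Y|+1)$, defines a dimension from this, and verifies the dimension formula $\dim\cl(Y\cup Z)+\dim(Y\cap Z)=\dim Y+\dim Z$, so that the closed sets satisfy the axiomatic (lattice-theoretic) characterisation of projective spaces from \cite{KSz}. You instead localise the problem: your counting argument --- a single ``super-doubling'' step $b_3\ge 10$ propagating through the chain to give $2^k\ge 10\cdot 2^{m-3}>2^m$ and hence a spreading set of size $m\le k-1$ --- forces every triangle to close up into a Fano plane, after which the classical coordinatisation by an elementary abelian $2$-group finishes the job. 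The key steps all check out: the injection $w\mapsto p\ast w$ does prove the doubling bound $|\cl(W\cup\{p\})|\ge 2|W|+1$ (the paper quotes Doyen--Wilson for this instead); the closure of a triangle is a subsystem of admissible order, so ``not $7$'' indeed means ``at least $9$''; and associativity of $\ast$ reduces to a computation inside the closed set generated by the three arguments, which in the only nontrivial case is a Fano plane. Your argument is more self-contained and quantitative (it makes explicit exactly where the extremal hypothesis is used), while the paper's buys a structural statement about \emph{all} closed subspaces and their dimensions along the way. One cosmetic point: you should note separately that the degenerate cases $n\in\{1,3\}$ (where no triangle exists) are trivially $\PG(0,2)$ and $\PG(1,2)$, and that ``every element is an involution'' should read ``every element satisfies $u\ast u=0$''.
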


One would be tempted to conjecture that a much stronger stability result also holds, i.e., that in any Steiner triple system $\STS(n)$ of order $n>1$ which is not isomorphic to some projective space $\PG(d,2)$, there exists a spreading set $U$ of size $|U|\leq \log_3(n)+1$  with  equality  attained for affine spaces $\AG(d,3)$ for $n=3^d$.

However, this is far from the truth. We call a spreading set minimal if none of its proper subsets is a spreading set.

\begin{prop}\label{almostmax} There exists a family of Steiner triple systems $\STS(n)$  with minimal spreading set of size $|U|= \log_2(n+1)-1$.
\end{prop}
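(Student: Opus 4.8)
The plan is to realize the bound of Theorem~\ref{maxofmin} up to an additive constant by a recursive construction. Write $n=2^{d+1}-1$, so that $\log_2(n+1)-1=d$; it then suffices to produce, for all large $d$, an $\STS(2^{d+1}-1)$ carrying a minimal spreading set of size exactly $d$. I would build these by the classical $v\mapsto 2v+1$ doubling, i.e.\ the same operation that turns $\PG(d-1,2)$ into $\PG(d,2)$, but seeded with a \emph{non}-projective base, so that one generator is permanently saved. Concretely, fix a base system $B=\STS(15)$ and double it $j$ times to obtain $\mathcal{S}_j=\STS(2^{4+j}-1)$ with a spreading set $U_j$ of size $3+j=(4+j)-1=\log_2(|V(\mathcal{S}_j)|+1)-1$, which is exactly the target size.

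For the base case I would exhibit an $\STS(15)$ that is generated by three points, i.e.\ three points $U_0=\{u_1,u_2,u_3\}$ with $\cl(U_0)=V$. (Such systems exist among the non-projective $\STS(15)$; in $\PG(3,2)$ three points close to a plane of $7$ points, which is precisely why its minimum spreading set has size $4$.) Any three-element spreading set is automatically \emph{minimal}: its only proper subsets have size at most $2$, and the closure of a pair is the single block through it, while the closure of a point is itself, so no proper subset can spread. I would also record that each leave-one-out closure $\cl_{B}(U_0\setminus u_i)$ is exactly the block through the remaining two points, hence a set of size $3=2^2-1$.

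For the doubling step, given $\mathcal{S}$ on a point set $P$ with $|P|=2^{k}-1$, take the affine part $A=\mathbb{F}_2^{k}$ ($|A|=2^{k}$), fix a bijection $\phi:\mathbb{F}_2^{k}\setminus\{\mathbf 0\}\to P$, and let $\mathcal{S}^{*}$ have the blocks of $\mathcal{S}$ together with all triples $\{x,\,x+v,\,\phi(v)\}$ for $x\in A$, $v\neq\mathbf 0$. This is a valid $\STS(2^{k+1}-1)$ in which $P$ is a subsystem. Setting $U^{*}=U\cup\{\mathbf 0\}$, spreading is immediate: $U$ spreads the subsystem $P$, and once all of $P$ lies in the closure the triples $\{\mathbf 0,\,v,\,\phi(v)\}$ force every $v\in A$ into the closure, so $\cl(U^{*})=V(\mathcal{S}^{*})$.

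Minimality is the crux, and where I expect the real work to be. Among the $|U^{*}|$ maximal proper subsets, deleting $\mathbf 0$ leaves $U\subseteq P$, whose closure stays inside the subsystem $P$ and is therefore not spreading. Deleting some $u\in U$ leaves $(U\setminus u)\cup\{\mathbf 0\}$; its closure contains the proper subsystem $C:=\cl_{\mathcal{S}}(U\setminus u)$ together with the affine points $\phi^{-1}(C)\cup\{\mathbf 0\}$, and the danger is that pairs inside this affine set regenerate new points of $P$ through their directions $v+v'$, cascading to all of $V(\mathcal{S}^{*})$. The closure stays bounded precisely when $\phi^{-1}(C)\cup\{\mathbf 0\}$ is an $\mathbb{F}_2$-subspace, in which case the closure equals $C\sqcup(\phi^{-1}(C)\cup\{\mathbf 0\})$, a proper subset. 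Thus I would maintain, as an inductive invariant, that $\phi$ can be chosen to linearize every leave-one-out closure, i.e.\ that these closures have size $2^{t}-1$ and their intersection pattern matches that of a flag of $\mathbb{F}_2$-subspaces; at the base this holds since the relevant closures are blocks mapped to $2$-flats through the origin. Verifying that this invariant propagates through the doubling (each closure having the form ``subsystem part $\sqcup$ affine subspace part'', which can again be coordinatized as a subspace at the next level) is the main obstacle and the heart of the argument. Finally, since $B$ is not $\PG(3,2)$ it is non-projective by Theorem~\ref{unicity}, and it persists as a subsystem of every $\mathcal{S}_j$, so no $\mathcal{S}_j$ is a projective space; together with the existence of a much smaller spreading set this also yields the promised non-invariance of the minimal spreading set size.
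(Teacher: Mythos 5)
Your construction and the spreading part are fine: the $v\mapsto 2v+1$ doubling seeded with a subsystem-free (hence $3$-generated) $\STS(15)$ does produce an $\STS(2^{d+1}-1)$ with a spreading set of size $d$, and the observation that the $\STS(15)$ ``saves one generator'' is exactly the right idea. But the proof is not complete: minimality rests entirely on the inductive invariant that at every level the gluing bijection $\phi$ can be chosen so that $\phi^{-1}\bigl(\cl(U\setminus u)\bigr)\cup\{\mathbf 0\}$ is an $\mathbb{F}_2$-subspace simultaneously for every $u\in U$, and you explicitly flag this as ``the main obstacle and the heart of the argument'' without proving it. This is a genuine gap, not a routine verification: one must show that all leave-one-out closures have size $2^{t}-1$, control their pairwise and higher intersections, and check that the resulting incidence pattern is realizable inside the subspace lattice of $\mathbb{F}_2^{k}$ at every level of the tower --- essentially the dimension-formula machinery of Theorem \ref{unicity}. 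Without it, the failure mode you yourself identify (pairs of affine points regenerating new points of $P$ through their sum of directions and cascading to the full closure) is not excluded.

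The paper sidesteps this entirely by building the object top-down in one step: take $\PG(d,2)$ with a minimal spreading set $\{v_0,\dots,v_d\}$ and replace the lines inside the $3$-dimensional subspace generated by $v_0,v_1,v_2,v_3$ with the blocks of a subsystem-free $\STS(15)$, chosen so that the three lines through the pairs of $\{v_1,v_2,v_3\}$ are left intact. The global subspace lattice is then inherited from $\PG(d,2)$: $\cl(U\setminus v_k)$ for $k\ge 4$ is a genuine subspace containing the modified region, and $\cl(U\setminus v_l)$ for $l\in\{1,2,3\}$ is a subspace meeting the modified region only in a preserved line, so minimality is immediate. Your tower, with the ``right'' choice of $\phi$ at each level, would produce exactly this object; if you want to keep the bottom-up formulation, the cleanest repair is to define the gluing maps globally from the coordinates of $\PG(d,2)$ rather than trying to propagate the linearization invariant level by level.
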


We also investigate whether the minimal spreading sets of an arbitrarily chosen $\STS$ are of the same size or how much their cardinality may alter. Although the minimal spreading sets of  $\STS$s mentioned above, i.e. of  geometric $\STS$s or of $\STS$s without subsystems have a uniform cardinality, this does not hold for every $\STS$.

\begin{theorem}\label{minsize_cardins}
There exists a Steiner triple system which has both a minimal spreading set of size $3$ and $n$.
\end{theorem}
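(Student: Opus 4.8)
The plan is to realize the two extreme sizes inside a single system produced by the classical doubling construction $v \mapsto 2v+1$. Starting from one point and iterating $n-1$ times, I obtain an $\STS(N)$ with $N = 2^{n}-1$ in which the system produced at each stage survives as a subsystem; this yields a tight flag of subsystems $P_1 \subset P_2 \subset \cdots \subset P_{n-1} \subset P_n = S$ with $|P_j| = 2^{j}-1$. The top step adjoins the set $Y = P_n \setminus P_{n-1}$ of $2^{n-1}$ points and is governed by a $1$-factorization $F = \{F_1,\dots,F_{2^{n-1}-1}\}$ of the complete graph on $Y$, where the factor $F_i$ is paired with the old point $p_i \in P_{n-1}$. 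The freedom in choosing $F$ (together with the lower factorizations) is what will let me tune the two features at once. The structural fact I use throughout is that a $\cl$-closed set is exactly a subsystem, so each $P_j$ is closed and the closure of any subset of $P_j$ computed in $S$ agrees with its closure computed inside $P_j$.

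For the minimal spreading set of size $n$ I take, by induction, a minimal spreading set $L_0$ of $P_{n-1}$ of size $n-1$ and adjoin one top point $y \in Y$. Since $P_{n-1}$ is closed, $L_0$ spreads to all of $P_{n-1}$; once every $p_i$ is present, the triples $\{p_i,y,y'\}$ recover the $F_i$-partner $y'$ of $y$ for each $i$, and as $i$ ranges over all $2^{n-1}-1$ factors these partners exhaust $Y$, so $\cl(L_0 \cup \{y\}) = S$. For minimality, deleting $y$ leaves $L_0 \subseteq P_{n-1}$, whose closure is the proper closed set $P_{n-1}$; deleting a point of $L_0$ leaves a set whose $P_{n-1}$-closure is a proper subsystem $Q \subsetneq P_{n-1}$, so I must guarantee that $Q \cup \{y\}$ does not spread. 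This last point is exactly where the choice of $F$ enters and is the crux of the argument.

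For the minimal spreading set of size $3$ I exhibit a triple $T \subseteq Y$ and show $\cl(T) = S$. Following the neighbour cascade, a pair of $Y$ lying in the factor $F_i$ produces the old point $p_i$, and these newly produced old points, together with the further $Y$-partners they generate, must both generate all of $P_{n-1}$ and fill all of $Y$. This happens provided $F$ is mixing enough that the cascade from $T$ is confined to no proper subsystem. Since $|T| = 3$, such a $T$ is automatically a minimal spreading set, so the only content is the escape statement $\cl(T) = S$; note also that the size-$n$ set just constructed attains the maximum $\log_2(N+1) = n$, so the two sizes $3$ and $n$ are the extreme possible values in $\STS(N)$.

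The main obstacle, and the reason the statement is delicate, is that the two requirements pull in opposite directions: freeing a triple wants $F$ as mixing as possible, whereas keeping $L_0 \cup \{y\}$ minimal wants every reduction $Q \cup \{y\}$ trapped in a proper subsystem. The plan to reconcile them is to separate the two mechanisms. The flag $P_1 \subset \cdots \subset P_{n-1}$ exists for \emph{every} choice of factorizations, since it comes from the recursive skeleton and not from the factors; hence the rigidity needed for minimality of the large set can be installed at the lower levels (for instance by taking the lower doublings projective, so that each $Q$ is a genuine subspace), while the mixing needed for $T$ is a property of the single top factorization of the complete graph on $Y$. I then fix the lower levels to be rigid and choose $F$ so that one triple of $Y$ spreads yet each of the finitely many reductions $Q \cup \{y\}$ still misses a point; since there are only $n-1$ trapping constraints against one escape constraint and a vast supply of $1$-factorizations, a counting argument over factorizations (or an explicit algebraic factorization perturbed away from the trapping ones) supplies a suitable $F$. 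Verifying that a single $F$ simultaneously meets the trapping constraints at the reductions and the spreading constraint for $T$ is the one essential check I expect to occupy the bulk of the proof.
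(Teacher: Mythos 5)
Your construction is a reasonable skeleton, but the proof has a genuine gap at exactly the point you flag as ``the one essential check'': the existence of a top $1$-factorization $F$ of the complete graph on $Y$ that simultaneously (i) frees some triple $T\subseteq Y$ and (ii) traps every reduction $Q_x\cup\{y\}$. Worse, the heuristic you offer for it is unsound. Work out what trapping actually requires. A proper subsystem $W$ containing $Q_x\cup\{y\}$ must satisfy $W\cap P_{n-1}=Q_x$ (if it contained all of $P_{n-1}$ it would swallow all $F_i$-partners of $y$ and hence all of $Y$), and then $W\cap Y$ must be closed under the matching $F_p$ for every $p\in Q_x$ while every pair inside $W\cap Y$ must lie in a factor indexed by $Q_x$; this forces $W\cap Y$ to be a set $Z\ni y$ of size $|Q_x|+1=2^{n-2}$ all of whose $\binom{2^{n-2}}{2}$ pairs receive colours from $Q_x$. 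So each of the $n-1$ trapping constraints demands an exponentially large, perfectly ``monochromatic-over-$Q_x$'' clique in the factorization --- a rigid algebraic structure that a generic $1$-factorization certainly does not contain. Hence the counting argument (``only $n-1$ trapping constraints against one escape constraint and a vast supply of factorizations'') points the wrong way: a typical $F$ satisfies the escape condition but violates all the trapping conditions. The claim that the needed rigidity ``can be installed at the lower levels'' is also incorrect, since the trapping condition just derived is a constraint on the top factorization $F$ itself, not on the lower doublings. You would need to exhibit an explicit $F$ that is projective-like on $n-1$ large cliques through $y$ yet mixing enough elsewhere to free a triple, and to verify both properties; none of this is done.

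The paper avoids this tension entirely by working with partial Steiner triple systems. It takes the union of the $n$ hyperplanes $\cl(a_1,\ldots,a_{i-1},a_{i+1},\ldots,a_n)$ spanned by an affine basis of $\AG(n-1,3)$, for which $\{a_1,\ldots,a_n\}$ is visibly a minimal spreading set; it then adjoins a handful of new points and explicitly listed triples to manufacture a spreading triple $\{b_1,b_2,b_3\}$, checking by hand that the added triples meet each hyperplane in at most one point so that minimality of the size-$n$ set is preserved; finally it embeds the resulting partial system into an $\STS$ of least possible order via the Bryant--Horsley theorem, the minimality of the order guaranteeing that sets whose closure was everything remain spreading. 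If you want to salvage your doubling approach, you would need to supply the explicit factorization and the two verifications; as written, the central existence claim is asserted rather than proved.
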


Next we make connections to previously studied concepts. Let us define recursively the {\em quasi-closure of a set $S$ after $i$ spreading steps}.\\ $S_0:=S$, and $S_i=S_{i-1}\cup N(S_{i-1}).$ Obviously, if $S_i=S_{i+1}$ for some $i$, then $\cl(S)=S_i$. 


\begin{defi}\label{quasi}
Suppose we are given a subset $S\subset V$ of the underlying set of a Steiner triple system. If the quasi-closure $S_1$  of $S$ after $1$ step coincides with the complete set $V$, then $S$ is called a saturating set. 
\end{defi}

Saturating sets are already defined in projective geometries and has a large literature, see  e.g. \cite{Davydov, Lins, Giu-survey}. In the terminology of projective geometries, these are point sets  $U$ such that the lines determined by the pairs of points from $U$ covers every point of the projective geometry. These objects have a significant application in coding theory as well.
 In fact, saturating sets in projective
geometry  are corresponding to covering codes, more precisely, linear codes with covering radius $2$  see \cite{DGMP2, DGMP3, 10, 26, 27}. 
The above Definition \ref{quasi} basically  generalizes  the concept of saturation to partial linear spaces. 
We get back to this relation in the last section of the paper.



The paper is organised as follows. In the next preliminary section we collect some basic results that we intend to apply later on. We discuss the results on the size of minimal spreading sets in Section 3, and prove Theorem \ref{maxofmin} \ref{unicity} and \ref{almostmax}. Section 4 is dedicated to the description of  Steiner triple system constructions which contain minimal spreading sets of different sizes. Finally we pose some open problems in Section 5. 

\section{Preliminaries}

In this preliminary section we point out the correspondence between the set of closures and the set of subspaces in projective and affine geometries and address some partial results concerning  the deviation of the size of intersections from the average intersection size of a point set $U$ and a hyperplane $\cH$ in projective spaces.

\begin{lemma}[folklore]\label{folk}
If $U$ is a point set of $\PG(n,q)$  such that every line intersects $U$ in $0,1 $ or $q+1$ points then $U$ is a subspace of  $\PG(n,q)$. If $U$ is a point set of $\AG(n,q)$  such that every line intersects $U$ in $0,1 $ or $q$ points then $U$ is a subspace of $\AG(n,q)$ provided that $q>2$.
\end{lemma}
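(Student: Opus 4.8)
The plan is to reformulate the hypothesis geometrically. A line of $\PG(n,q)$ has exactly $q+1$ points and a line of $\AG(n,q)$ has exactly $q$ points, so in either case the largest admissible intersection size corresponds to the whole line being contained in $U$. Thus the assumption says precisely that $U$ is \emph{line-closed}: whenever a line meets $U$ in at least two points, the entire line lies in $U$. Indeed, a line through two distinct points of $U$ cannot meet $U$ in $0$ or $1$ points, so it must meet it in $q+1$ (resp. $q$) points. The task is then to upgrade line-closure to the statement that $U$ is a subspace.

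For the projective case this upgrade is immediate. Lift $U$ to the cone $\tilde U \subseteq \mathbb{F}_q^{n+1}$ consisting of $\{\mathbf 0\}$ together with all vectors representing points of $U$. This cone is closed under scalar multiplication by construction. For closure under addition, take representatives $p, q$ of two distinct points of $U$; line-closure says every point $\langle \alpha p + \beta q \rangle$ of the line through them lies in $U$, so in particular $p + q \in \tilde U$. Hence $\tilde U$ is a linear subspace and $U$ is its projectivization. This works for every $q$, consistent with the fact that over $\mathbb{F}_2$ a projective line already has three points.

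The affine case is the substantive one, and here the hypothesis $q > 2$ is essential. First I would pass to planes: given three non-collinear points of $U$, I would show that the affine plane they span lies in $U$. Choosing coordinates so the three points are $(0,0)$, $(1,0)$, $(0,1)$, the three sides of the triangle already lie in $U$ by line-closure, and an arbitrary point $(x,y)$ with $x,y \neq 0$ can be exhibited on the line joining a suitable point $(a,0)$ of the first axis to a suitable point $(0,b)$ of the second. Solving the incidence conditions forces $b \notin \{0,y\}$ and $a = xb/(b-y) \neq 0$; such a $b$ exists exactly because $q > 2$ leaves a nonzero value distinct from $y$. This is the crux of the argument, and it is also precisely where $q = 2$ breaks down: over $\mathbb{F}_2$ a line has only two points, so the intersection hypothesis is vacuous and $U$ need not be a subspace at all.

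Once plane-closure is established, concluding is routine: fix a point of $U$ and translate it to the origin, so the translate $U_0$ contains $\mathbf 0$. Line-closure through $\mathbf 0$ gives closure under scalar multiplication, and plane-closure applied to $\mathbf 0, u, v$ for linearly independent $u, v \in U_0$ gives $u + v \in U_0$, hence closure under addition. Therefore $U_0$ is a linear subspace and $U$ is an affine flat, completing the proof.
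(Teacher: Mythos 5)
The paper states this lemma as folklore and gives no proof of it, so there is nothing to compare against; your argument has to stand on its own, and it does. Both halves are correct: the reduction of the intersection hypothesis to line-closure is the right reformulation, the lift to the cone in $\mathbb{F}_q^{n+1}$ settles the projective case for every $q$ (the only unstated case, adding two representatives of the same point, is trivially covered by scalar closure), and in the affine case the plane-closure computation $a = xb/(b-y)$ with $b \notin \{0,y\}$ correctly isolates where $q>2$ is used, matching the standard counterexamples in $\AG(n,2)$ where the hypothesis is vacuous. The final translation-to-the-origin step cleanly converts plane-closure into closure under addition (with the linearly dependent case again absorbed by scalar closure), so the proof is complete.
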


\begin{prop} The closure $\cl(U)$ of an arbitrary set $U \in \PG(n,2)$ coincides with the smallest subspace in which $U$ can be embedded. 
\end{prop}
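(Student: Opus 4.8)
The plan is to identify, inside $\PG(n,2)$, the subsets with empty neighbourhood with the projective subspaces, and then read off the statement directly from the definition of the closure. The starting observation is that for $q=2$ every line of $\PG(n,2)$ carries exactly $q+1=3$ points, so the lines are precisely the triples of the associated Steiner triple system, and the ambient graph $G$ is complete. Consequently, for a point set $W$ the neighbourhood $N(W)$ is empty exactly when no line meets $W$ in exactly two points; equivalently, every line meets $W$ in $0$, $1$ or $3$ points.

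First I would prove the equivalence that a subset $W\subseteq \PG(n,2)$ satisfies $N(W)=\emptyset$ if and only if $W$ is a projective subspace. For the backward direction, a subspace corresponds to a linear subspace of $\mathbb{F}_2^{n+1}$, which is closed under addition; since the third point of the line through distinct points $x,y$ is $x+y$, a subspace contains the whole of every line it meets in two points, whence $N(W)=\emptyset$. For the forward direction, the condition $N(W)=\emptyset$ says every line meets $W$ in $0$, $1$ or $3$ points, and Lemma \ref{folk} (in its projective form, applied with $q=2$, so $q+1=3$) yields that $W$ is a subspace.

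With this equivalence in hand, the proposition is immediate from Definition \ref{szomszed}: the closure $\cl(U)$ is by definition the smallest set $W\supseteq U$ with $N(W)=\emptyset$, and by the equivalence these are exactly the subspaces containing $U$. Hence $\cl(U)$ is the smallest subspace containing $U$. The existence of a unique smallest such set is guaranteed both by the remark in Definition \ref{szomszed} and by the standard fact that an arbitrary intersection of projective subspaces is again a subspace.

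The argument is essentially bookkeeping once Lemma \ref{folk} is available, so I do not anticipate a serious obstacle. The only point requiring a little care is the forward direction of the equivalence, where one must check that the hypotheses of Lemma \ref{folk} are genuinely met: the ``two points in $W$ force the third'' condition, combined with lines having exactly three points, must be seen to exclude every intersection size other than $0$, $1$ and $3$. Since we invoke the projective form of the lemma, the restriction $q>2$ attached to its affine counterpart plays no role here.
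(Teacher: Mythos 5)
Your proof is correct, and it takes a slightly different route from the paper's. The paper argues by two separate containments: it shows $\cl(U)$ is contained in the smallest subspace through $U$ by induction on the quasi-closure steps $U_0\subseteq U_1\subseteq\cdots$ (each new point lies on a line with two old points, hence stays inside the subspace), and then shows $\cl(U)$ is itself a subspace via Lemma \ref{folk}, which gives the reverse containment. You instead prove the single equivalence that $N(W)=\emptyset$ holds exactly when $W$ is a projective subspace (Lemma \ref{folk} for one direction, closure of linear subspaces under addition for the other) and then read both containments off at once from the definition of $\cl(U)$ as the \emph{smallest} superset with empty neighbourhood. Your version is arguably cleaner and makes the "closed sets are subspaces" dictionary explicit, which is the real content here; the trade-off is that it leans on the well-definedness of the closure as a smallest such set (asserted but not re-derived in Definition \ref{szomszed}), whereas the paper's induction works directly with the constructive quasi-closure description and so does not need that. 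You cover this adequately by noting that an intersection of subspaces is a subspace. Both arguments ultimately rest on Lemma \ref{folk} in the same way.
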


\begin{proof} First observe that $\cl(U)$  is a subset of the smallest subspace in which $U$ can be embedded. Any point $P$ of $\cl(U)$  is obtained after some number of spreading steps, i.e., there is a number $i$ s.t. $P$ is contained in the quasi-closure $U_i$ of a set $U$ after $i$ spreading steps, but $P\not\in U_{i-1}$. This also means that there is a line $\ell$ which contains $P$ and intersects $U_{i-1}$ in exactly $2$ points. Therefore by induction each quasi-closure $U_i$ is contained in the  smallest subspace in which $U$ can be embedded, since each point of $U_i\setminus U_{i-1}$ is a point of a line fully contained in the subspace in view.\\
On the other hand, the closure intersects each line of the space in $0, 1$ or $3$ points, hence it must be a subspace according to Lemma \ref{folk}. 
\end{proof}

Next we examine the distribution of the size of intersections of a fixed set having $m$ elements and the hyperplanes of the projective plane. We would apply the result for the case when the order is $q=2$, but it is easy to generalize the method. 

\begin{prop}\label{szoras}
Let $U$ be a set of $m$ points in $\PG(n,2)$ and let   $u(\cH)$ denote the  size of the intersection of $U$ and a hyperplane $\cH$ of the projective space. Then there exists a hyperplane $\cH$ for which $$|u(\cH)-m/2|> \sqrt{\frac{m}{4}-\frac{m^2}{2^{n+3}}}.$$ Here note that the expected value of the intersection size is $\frac{2^{n}-1}{2^{n+1}-1}m$ which is almost $m/2$, thus the above estimate is closely related to the variance.
\end{prop}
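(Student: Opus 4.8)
The plan is to run a second-moment estimate over all hyperplanes and then exploit the fact that the number of hyperplanes of $\PG(n,2)$ is \emph{one less} than $2^{n+1}$. Concretely, I would study the centered sum $\sum_{\cH}\big(u(\cH)-m/2\big)^2$: if every hyperplane satisfied $|u(\cH)-m/2|\le t$, this sum would be at most $t^2$ times the number of hyperplanes, so it suffices to show the sum is strictly larger than that.

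First I would collect the standard incidence counts in $\PG(n,2)$: there are exactly $2^{n+1}-1$ hyperplanes, each point lies on $2^n-1$ of them, and each pair of distinct points lies on $2^{n-1}-1$ common hyperplanes (namely those containing the line they span). Counting point--hyperplane incidences then gives $\sum_{\cH}u(\cH)=m(2^n-1)$, and expanding $u(\cH)^2=\sum_{P,Q\in U}\mathbf{1}[P,Q\in\cH]$ and separating the diagonal from the off-diagonal pairs gives
\[
\sum_{\cH}u(\cH)^2=m(2^n-1)+m(m-1)(2^{n-1}-1).
\]

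Next I would assemble
\[
\sum_{\cH}\Big(u(\cH)-\tfrac m2\Big)^2=\sum_{\cH}u(\cH)^2-m\sum_{\cH}u(\cH)+\frac{m^2}{4}\,(2^{n+1}-1),
\]
and check that after cancellation (the $m^2 2^n$ terms cancel) this collapses to $m\,2^{n-1}-\tfrac{m^2}{4}=2^{n+1}\big(\tfrac m4-\tfrac{m^2}{2^{n+3}}\big)=2^{n+1}t^2$, where $t$ is the bound in the statement. Since there are only $2^{n+1}-1$ hyperplanes, the average of $\big(u(\cH)-m/2\big)^2$ equals $\tfrac{2^{n+1}}{2^{n+1}-1}t^2>t^2$ as soon as $t^2>0$, i.e. whenever $m<2^{n+1}$ (which holds for any point set). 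Hence some hyperplane attains a value strictly above $t^2$, proving the claim.

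The computations are routine; the one genuinely delicate point, and the whole source of the \emph{strict} inequality, is the off-by-one between the hyperplane count $2^{n+1}-1$ and the $2^{n+1}$ concealed inside $t^2$. A naive averaging against the true expectation $\bar u=\frac{(2^n-1)m}{2^{n+1}-1}$ (rather than $m/2$) would only yield a non-strict bound, so I would make sure to center at $m/2$ and to verify the pair count $2^{n-1}-1$ carefully, since that is what forces the $m^2$-terms to combine correctly.
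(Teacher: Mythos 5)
Your proposal is correct and follows essentially the same route as the paper: the identical second-moment computation of $\sum_{\cH}\bigl(u(\cH)-m/2\bigr)^2$ using the incidence counts $2^{n+1}-1$, $2^n-1$, $2^{n-1}-1$, arriving at the same value $m\cdot 2^{n-1}-m^2/4$. You in fact make explicit the final averaging step (the off-by-one between $2^{n+1}-1$ hyperplanes and the $2^{n+1}$ inside the bound) that the paper leaves as ``the claim thus in turn follows.''
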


\begin{proof}
  Let us consider the sum  $\sum_{\cH}|u(\cH)-m/2|^2 $ for all hyperplanes $\cH$ of $\PG(n,2)$. Taking into consideration that the number of hyperplanes is $2^{n+1}-1$, the number of hyperplanes through a point is $2^{n}-1$ and the number of hyperplanes through a line is $2^{n-1}-1$, we obtain the following  by standard double counting.
  
  $$\sum_{\cH}|u(\cH)-m/2|^2= 2\binom{m}{2}(2^{n-1}-1)-(m-1)m\cdot(2^n-1)+\frac{m^2}{4}(2^{n+1}-1)=-\frac{m^2}{4}+m\cdot2^{n-1}.$$
  The claim thus in turn follows.
\end{proof}

\section{Size of spreading sets}

Throughout the rest of the paper, we use capital letters for ordinary vertex sets and calligraphic  capital letters for sets together the (induced) partial Steiner system defined on the set. 

Building on the observations 
we made in the Preliminaries, we give sharp upper bounds on the maximum size of minimal spreading sets of Steiner triple systems $\STS(n)$.
First we prove Theorem \ref{maxofmin} by confirming the bound $|U|\leq \log_2(n+1)$.

\begin{proof}[Proof of Theorem \ref{maxofmin}]
We can construct a spreading set in the following way. If $n=3$ then the statement clearly holds. Otherwise, we do the following procedure: $U_1=\{v_0, v_1\}$  for an arbitrary point pair, and $U_{i+1}=U_{i}\cup v_{i+1}$ where $v_{i+1}\in V \setminus \cl(U_i)$ is arbitrarily chosen if $V \setminus \cl(U_i)$ is nonempty, otherwise $U=U_i$. The key observation is the following: $|\cl(U_{i+1}|\geq 2|\cl(U_{i}|+1$ which is the easy part of the Theorem of Doyen and Wilson \cite{DW}, claiming that there exists an $\STS(v)$ which contains an
$\STS(w)$ as a proper subdesign if and only if $v \geq 2w + 1$,  and  $v, w \equiv  1 { \mbox \ or \ } 3  \pmod 6$.
This in turn implies the upper bound of $|U|$.

As it was pointed out in Theorem \ref{unicity}, we have equality if $n=2^{d+1}-1$ and the triple system is isomorphic to $\PG(d,2)$.
\end{proof}

\begin{cor}
If there is a minimal spreading set $U$ in a Steiner triple system $\STS(n)$ then $|U|\leq\log_2(n+1)$.
\end{cor}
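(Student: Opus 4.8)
The plan is to derive this corollary directly from Theorem~\ref{maxofmin}, whose proof has just been given, since the corollary is essentially a restatement of the established upper bound. The key logical point is that Theorem~\ref{maxofmin} guarantees the \emph{existence} of \emph{some} spreading set of size at most $\log_2(n+1)$, whereas the corollary concerns the size of an \emph{arbitrary minimal} spreading set; I must bridge this gap.

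First I would recall the greedy construction from the proof of Theorem~\ref{maxofmin}: starting from a pair $U_1=\{v_0,v_1\}$ and adding at each step a point $v_{i+1}\in V\setminus\cl(U_i)$, the doubling inequality $|\cl(U_{i+1})|\geq 2|\cl(U_i)|+1$ forces the process to terminate with a spreading set after at most $\log_2(n+1)$ steps. The crucial observation is that the set $U$ produced by this greedy procedure is not merely spreading but in fact \emph{minimal}: at each stage the newly added point $v_{i+1}$ lies strictly outside $\cl(U_i)$, so removing it (or any earlier point) would leave a set whose closure fails to reach all of $V$. Hence the theorem actually produces a minimal spreading set meeting the bound.

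The substantive content of the corollary is therefore the claim that \emph{every} minimal spreading set obeys the same bound, not just the one the greedy algorithm happens to find. I would argue this by showing that any minimal spreading set $U$ can itself be recovered by an ordering of its points consistent with the greedy analysis: choose any point $w\in U$, pair it with a second point of $U$, and add the remaining points of $U$ in an order such that each genuinely enlarges the running closure. Minimality of $U$ guarantees that no point of $U$ is redundant, so such an order exists in which every added point lies outside the current closure, and the same doubling inequality $|\cl(U_{i+1})|\geq 2|\cl(U_i)|+1$ applies verbatim. Since the final closure is all of $V$, we get $n+1\geq 2^{|U|}$, i.e.\ $|U|\leq\log_2(n+1)$.

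The main obstacle I anticipate is justifying that the points of an arbitrary minimal spreading set can always be ordered so that each successive point strictly enlarges the closure, as this is precisely what makes the doubling inequality applicable. This requires verifying that minimality prevents a situation where a point of $U$ is already absorbed into the closure of the others before it is added; equivalently, one must confirm that for a minimal spreading set, $\cl(U\setminus\{w\})\neq V$ for each $w$, which is exactly the definition of minimality and supplies the needed point outside the closure at the final step, with the intermediate steps handled inductively. Once this ordering is secured, the calculation is immediate.
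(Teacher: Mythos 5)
Your argument is correct and is essentially the paper's (implicit) one: order the points of an arbitrary minimal spreading set so that each new point strictly enlarges the running closure --- such an ordering exists because otherwise a proper initial segment would already be spreading, contradicting minimality --- and then apply the same Doyen--Wilson doubling inequality from the proof of Theorem~\ref{maxofmin} to get $n\geq 2^{|U|}-1$. (Your side claim that the greedy set produced in Theorem~\ref{maxofmin} is automatically minimal is not fully justified as stated --- removing an \emph{earlier} point could in principle still leave a spreading set --- but this claim is not needed for the corollary.)
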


\begin{remark}
This is sharp for $\AG(2,3)$ as all of the minimal spreading sets are of cardinality $3=\lfloor\log_2(9+1)\rfloor$. But if we restrict ourselves to equality in Theorem \ref{maxofmin}, then we one can obtain a characterization of Steiner triple systems for which the bound is attained. These are exactly the projective spaces.
\end{remark}


\begin{proof}[Proof of Theorem \ref{unicity}]
In this proof we will denote the underlying set of $\STS(n)$ by $X$. Furtermore, we will call a subset $U\subseteq X$ a quasi-subspace if $\cl(U)=U$ and use  the notation $U\leq X$. Our goal is confirm that the structure and  'dimension' of quasi-subspaces is in one-to-one correspondence with the respective structure of subspaces of a projective space of order $\log_2(n+1)-1.$ Here we follow the description of projective spaces from \cite{KSz}. 

Each subset $S\subset X$ constructed in the proof of Theorem \ref{maxofmin} is a minimal spreading set. Indeed, otherwise there there would be a $W\subset S$ minimal spreading set of size $|W|<\log_2(n+1)$.

Observe that is $Y,Z\leq X$ are  two quasi-subspaces then $Y\cap Z$ is also a quasi-subspace, as its closure should be both in $Y$ and $Z$.

Consider a quasi-subspace $Y\leq X$ and two minimal spreading sets $A,B\subset Y$ of $Y$. Then, applying Theorem \ref{maxofmin}, we obtain $|A|=|B|=\log_2(|Y|+1)$  by induction. Let us denote by  $\dim Y:=|A|-1$ the concept analogue to dimension. To complete the proof  we have to prove the dimension theorem.

Let $Y,Z\leq X$ be two quasi-subspaces.  Then it is easy to see that $Y$ and $Z$ are disjoint if and only if $\dim(\cl(Y\cup Z))=\dim Y+\dim Z+1$.

If $Y,Z\leq X$ are two arbitrary quasi-subspaces then $\dim(\cl(Y\cup Z))+\dim(Y\cap Z)=\dim Y+\dim Z$: take a minimal spreading set $A$ of $Y\cap Z$ and a set $B$ from $Z$ such that $A\cup B$ is a minimal spreading set in $Z$. Then $Y$ and $\cl(B)$ are disjoint so we can apply the statement above.
\end{proof}

We continue by studying whether some stability holds for this bound. The next statement asserts that there is no stability in the sense that if we decrease the size by $1$, we will have infinitely many Steiner triple systems $\STS(n)$ having minimal spreading sets with this respective size. 

\begin{prop}\label{almost}
There exists a family of Steiner triple systems $\STS(n)$ with minimal spreading set of size $|U|=\log_2(n+1)-1$.
\end{prop}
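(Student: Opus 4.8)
The plan is to build the family recursively, starting from a single small seed and repeatedly applying a doubling that raises both $\log_2(n+1)$ and the size of a chosen minimal spreading set by exactly one. Since a minimal spreading set of size $\log_2(n+1)-1$ forces $n+1$ to be a power of two, the targets are the orders $n=2^m-1$ carrying a minimal spreading set of size $m-1$; the seed will be an $\STS(15)$ (the case $m=4$), and the doubling will produce $\STS(31),\STS(63),\dots$. Throughout I use the fact, already built into Definition \ref{szomszed}, that the closure operation is monotone and that a set contained in a proper quasi-subspace cannot spread.

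For the doubling step, given $\mathcal{S}_0=\STS(w)$ on a point set $W$, I would construct $\mathcal{S}_1=\STS(2w+1)$ on $W\cup\{\infty\}\cup\overline{W}$, where $\overline{W}=\{\overline p:p\in W\}$ is a disjoint copy of $W$, using the blocks of $\mathcal{S}_0$ themselves to supply the $1$-factorization needed for the Doyen--Wilson doubling. Concretely, the blocks of $\mathcal{S}_1$ are: the blocks $\{p,q,r\}$ of $\mathcal{S}_0$; the triples $\{p,\infty,\overline p\}$ for $p\in W$; and the triples $\{p,\overline q,\overline r\}$ for every block $\{p,q,r\}$ of $\mathcal{S}_0$. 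A routine pair-by-pair check shows this is a Steiner triple system with $W$ as a subsystem. Now take a minimal spreading set $U_0$ of $\mathcal{S}_0$ and set $U_1:=U_0\cup\{\infty\}$. Spreading of $U_1$ is immediate: starting from $U_0$ the closure first fills all of $W$ (old--old pairs keep the third point old, and $U_0$ already spreads $\mathcal{S}_0$), after which the triples $\{p,\infty,\overline p\}$ recover every $\overline p$, so $\cl(U_1)=V$. Thus $|U_1|=|U_0|+1$, which preserves the identity $|U_1|=\log_2(|V|+1)-1$.

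The heart of the argument is minimality of $U_1$, and this is where the choice of $1$-factorization pays off. The key step I would isolate is a \emph{lifting} property: if $W'$ is any subsystem of $\mathcal{S}_0$, then $W'\cup\{\infty\}\cup\overline{W'}$ is a subsystem of $\mathcal{S}_1$, because each relevant pair ($\overline q\,\overline r$, $p\,\overline q$, $p\,\infty$, $\infty\,\overline q$ with $p,q,r\in W'$) is completed by a point that remains inside $W'\cup\{\infty\}\cup\overline{W'}$ precisely since $W'$ is closed in $\mathcal{S}_0$. Granting this, I examine the two kinds of proper subsets of $U_1$. The subset $U_0\subseteq W$ has closure $W\neq V$. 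For a subset $(U_0\setminus\{u\})\cup\{\infty\}$, minimality of $U_0$ gives that $W_u:=\cl_{\mathcal{S}_0}(U_0\setminus\{u\})$ is a \emph{proper} subsystem of $\mathcal{S}_0$, and the lifting property then traps the closure of $(U_0\setminus\{u\})\cup\{\infty\}$ inside the proper subsystem $W_u\cup\{\infty\}\cup\overline{W_u}\subsetneq V$. Hence no proper subset of $U_1$ spreads, so $U_1$ is a minimal spreading set of the required size; in particular, since $|U_1|<\log_2(|V|+1)$, Theorem \ref{unicity} shows $\mathcal{S}_1$ is not projective, matching the ``no stability'' theme.

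It remains to supply the seed, the only non-formal ingredient: I need an $\STS(15)$ possessing a spreading non-collinear triple, equivalently a minimal spreading set of size $3=\log_2 16-1$. Any $\STS(15)$ with no sub-$\STS(7)$ works, since then every non-collinear triple has closure equal to the whole system, while no pair spreads (its closure is just the block it determines), so the minimum is exactly $3$; such subsystem-free systems are known to occur among the $80$ isomorphism classes of $\STS(15)$. Iterating the doubling from this seed yields $\STS(2^m-1)$ with a minimal spreading set of size $m-1$ for every $m\ge 4$, which proves the proposition. I expect the genuinely delicate points to be exactly two: verifying the lifting property that makes every proper subset fall into a proper quasi-subspace, and pinning down the base case; the remainder is bookkeeping with the closure operator.
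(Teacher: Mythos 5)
Your proof is correct, but it takes a genuinely different route from the paper. The paper's construction is a one-shot local perturbation: it takes $\PG(d,2)$ with a minimal spreading set $\{v_0,\dots,v_d\}$, replaces the $15$-point subspace $\cl(v_0,v_1,v_2,v_3)$ by a subsystem-free $\STS(15)$ on the same point set (arranged so that the three lines on the pairs from $\{v_1,v_2,v_3\}$ survive), and then checks directly that $\{v_1,\dots,v_d\}$ spreads while each $(d-1)$-subset is trapped in a hyperplane-like closure. Your construction instead iterates a $w\mapsto 2w+1$ doubling starting from the same seed (a subsystem-free $\STS(15)$), with the $1$-factorization of the new half induced by the blocks of the old system; the payoff is your lifting lemma, which shows that every closed set $W'$ of $\mathcal{S}_0$ lifts to the closed set $W'\cup\{\infty\}\cup\overline{W'}$ of $\mathcal{S}_1$, and this gives a clean inductive proof that minimality of the spreading set is preserved under doubling. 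I checked the details: your block list is a genuine $\STS(2w+1)$ (each pair type is covered exactly once), the lifting property holds (you omitted the pair $p\,\overline{p}$ from your list, but its third point is $\infty$, which is in the lifted set), and the seed argument is sound since in a subsystem-free $\STS(15)$ every non-collinear triple spreads and no pair does. What each approach buys: the paper's systems stay globally identical to $\PG(d,2)$ outside one $15$-point block, which makes the ``almost projective'' message vivid but leaves the consistency of the replacement (the intact lines) somewhat informally justified; your recursion is more self-contained and fully rigorous at the minimality step, and it generalizes --- any $\STS$ with a minimal spreading set of size $k$ doubles to one with a minimal spreading set of size $k+1$ --- though, like the paper's construction, it only produces orders of the form $2^t-1$, consistent with the open problem posed at the end of the paper.
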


\begin{proof}
Our goal is present a construction gained from a projective space by slightly modifying the system triples (i.e., lines) of $\PG(d,2)$ with $n=2^{d+1}-1$ so that the structure of subspaces do not change significantly.

Consider $\PG(d,2)$ with a minimal spreading set $\{v_0,\dots, v_d\}$. Take a basis in $\mathbb F_2^{d+1}$ such that the $i$th basis vector is the representative vector of $v_i$ ($i\in\{ 0,\dots,d\}$).

Doyen \cite{Doyen} showed that there are Steiner triple systems of  any admissible order larger than $9$ which  contain no nontrivial subsystems, i.e., every minimal spreading system is of size $3$. Let us modify the $3$-dimensional subspace generated by $v_0,v_1,v_2,v_3$ in $\PG(d,2)$ to a Steiner system described above of size $15$. This modified 'subspace' is now generated by any $3$ 'non-collinear' element. We can choose the position of the triples such that $v_1,v_2,v_3$ the triplet on each pair $\{v_1, v_2\}$, $\{v_2, v_3\}$, $\{v_1, v_3\}$ in this $\STS$ remain intact.

We will show that this modified triple system that we denote  by $\cX$ has a minimal spreading set of size $|U|=\log_2(n+1)-1=d$. Note that the points of $\PG(d,2)$ and $\cX$ are the same so we will denote them with their homogeneous coordinates.

Consider $U=\{v_1,\dots, v_d\}$. This is a spreading system in ${\cX}$ because $\{v_0,\dots, v_d\}$ was a spreading system in $\PG(d,2)$ and $\cl_{\cX}(v_1,v_2,v_3)=\cl_{\PG(d,2)}(v_0,v_1,v_2,v_3)$. To show that $U$ is a minimal spreading set it is enough to prove that any subset of $U$ of size $d-1$ is not a spreading set.

$U\setminus\{v_d\}$ is not a spreading system because $\cl_{\cX}(U\setminus\{v_d\})=\cl_{\PG(d,2)}(v_0,\dots v_{d-1})$ which is not the whole system since $\{v_0,\dots, v_d\}$ is a minimal spreading set in $\PG(d,2)$. Similarly for any $k\in\{ 4,\dots, d\}$ the set $U\setminus\{ v_k\}$ is not a spreading system of $\cX$.

$U\setminus\{ v_l\}$ (for $l\in\{ 1,2,3\}$) is also not a spreading system  because
$\cl_{\PG(d,2)}(U\setminus\{ v_l\})$ is a subspace where all the triplets are remained intact during the modification, thus $\cl_{\PG(d,2)}(U\setminus\{ v_l\})$
 coincides with the same point set.
\end{proof}

\section{Minimal spreading sets of different sizes}

In this section we will show that for an arbitrary integer $n>3$ there exists a Steiner triple system $\cS_n$ which has a minimal spreading set of size $3$ and one of size $n$.

Let $a_1,a_2,\ldots,a_n$ be an affine base in $\AG(n-1,3)$. Let us denote by $\cH_i$ the affine hyperplanes $\cl(a_1,\ldots,a_{i-1},a_{i+1},\ldots,a_n)$ for $1\leq i\leq n$. Finally,  let $\cX:=\bigcup_{i=1}^n\cH_i$ and  let $X_i$ be the underlying set of $\cH_i$.

\begin{prop}
The system $\cX$ is a partial Steiner triple system and has a minimal spreading set of size $n$.
\end{prop}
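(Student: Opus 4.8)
The plan is to coordinatize $\AG(n-1,3)$ conveniently and then read off all three assertions directly from the behaviour of affine closures. I would realize $\AG(n-1,3)$ as the affine hyperplane $\{x\in\mathbb{F}_3^n:\sum_j x_j=1\}$ of $\mathbb{F}_3^n$ and take the affine base to be the standard basis vectors $a_i=e_i$. In this model a triple $\{x,y,z\}$ is a line exactly when the three points are distinct and $x+y+z=0$, and the hyperplane $\cH_i=\cl(a_1,\dots,a_{i-1},a_{i+1},\dots,a_n)$ is precisely $X_i=\{x:\sum_j x_j=1,\ x_i=0\}$, so the underlying set of $\cX=\bigcup_i\cH_i$ consists of the points having at least one zero coordinate. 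The partial-Steiner claim is then immediate: every block of $\cX$ lies in some $\cH_i$, hence is a line of $\AG(n-1,3)$, and two points determine a unique such line, so no pair lies in two blocks. (Here I would record the small fact that a line through two points of $\cH_i$ stays inside $\cH_i$, so there is no ambiguity when a pair lies in several hyperplanes.)

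For spreading I would invoke the affine analogue of the Proposition from the Preliminaries: inside $\AG(d,3)$ the closure of a set equals its affine span. One direction holds because each spreading step keeps the third point of a line in the span; the other because a closed set meets every line in $0$, $1$ or $3$ points and is therefore a subspace by Lemma~\ref{folk} (using $q=3>2$). Applying this inside each $\cH_i\cong\AG(n-2,3)$, the $n-1$ points $\{a_j:j\neq i\}$ form an affine base and hence close to all of $X_i$, using only $\cH_i$-lines, which are blocks of $\cX$. Thus $X_i\subseteq\cl_\cX(\{a_1,\dots,a_n\})$ for every $i$, whence $\cl_\cX(\{a_1,\dots,a_n\})=\bigcup_i X_i=X$, so $\{a_1,\dots,a_n\}$ is spreading.

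Minimality is the step I expect to require the most care, and I regard it as the main point. By monotonicity of closure it suffices to show that no $(n-1)$-element subset spreads, and by the symmetry of the construction under permuting coordinates I may delete $a_n$. The crucial observation is that the block set of $\cX$ is contained in the line set of $\AG(n-1,3)$, so every spreading step available in $\cX$ is also available in the full affine space, giving $\cl_\cX(S)\subseteq\cl_{\AG}(S)$ for any $S$. Taking $S=\{a_1,\dots,a_{n-1}\}$, the ambient closure is the affine span of $n-1$ independent points, namely the hyperplane $X_n$; hence $\cl_\cX(\{a_1,\dots,a_{n-1}\})\subseteq X_n$, which is a proper subset of $X$ since $a_n\in X\setminus X_n$. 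Therefore $\{a_1,\dots,a_n\}$ spreads while no proper subset does, exhibiting a minimal spreading set of size $n$.
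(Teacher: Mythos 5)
Your proposal is correct and follows essentially the same route as the paper: $\cX$ inherits the partial Steiner property from $\AG(n-1,3)$, the full base spreads because each $\cH_i$ is closed off by $\{a_j : j\neq i\}$, and minimality follows from $\cl_{\cX}(S)\subseteq\cl_{\AG}(S)$ together with $a_i\notin\cH_i$. The explicit coordinatization and the careful justification that affine closures coincide with affine spans (via Lemma~\ref{folk} with $q=3$) merely spell out details the paper leaves implicit.
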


\begin{proof}
The affine space $\AG(n-1,3)$ is a Steiner triple system and $\cX\subset \AG(n-1,3)$ thus $\cX$ is a partial Steiner triple system.

The set $\{a_1,\ldots,a_n\}$ is a minimal spreading set of $\cX$ as $\cl_{\cX}(\{a_1,\ldots,a_n\})=\cX$ and $a_i\not\in\cH_i=\cl(a_1,\ldots,a_{i-1},a_{i+1},\ldots,a_n)$.
\end{proof}

Let us choose points $b_1$, $b_2$ and $b_3$  from $X_1\setminus(X_2\cup\ldots \cup X_n)$, $X_2\setminus(X_1\cup X_3\cup\ldots \cup X_n)$ and $X_3\setminus(X_1\cup X_2\cup X_4\cup\ldots \cup X_n)$, respectively. Note that these sets are nonempty. Add $n+4$ further points $b_{4},\ldots,b_{n+7}$ to $X$. For $k=1,\ldots, n+4$ add the triplet $(b_k,b_{k+1},b_{k+3})$ and for $l=4,\ldots,n+3$ add the triplet $(b_l,b_{l+4},a_{l-3})$ to the existing partial triple system $\cX$. Let us denote this extended triple system by $\cX'$.

It is easy to see that $\cX'$ is a partial Steiner triple system i.e. there does not exist a point pair $x,y\in X'$ such that there is more than one triplet in $\cX'$ which contains both $x$ and $y$.

\begin{prop}
The set $\{b_1,b_2,b_3\}$ is a minimal spreading set of $\cX'$.
\end{prop}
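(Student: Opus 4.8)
The plan is to establish the two defining properties separately: that $\{b_1,b_2,b_3\}$ spreads to all of $\cX'$, and that each of its three two-element subsets has a closure strictly smaller than $\cX'$.

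For the spreading part I would first propagate along the newly added chain $(b_k,b_{k+1},b_{k+3})$. Starting from $b_1,b_2,b_3$, the instances $k=1,2$ immediately produce $b_4$ and $b_5$; I would then argue by induction that once $b_1,\ldots,b_m$ all lie in $\cl(\{b_1,b_2,b_3\})$ with $3\le m\le n+6$, the triple $(b_{m-2},b_{m-1},b_{m+1})$ (the instance $k=m-2$, legitimate precisely because $1\le m-2\le n+4$) forces $b_{m+1}$ into the closure as well. This yields $\{b_1,\ldots,b_{n+7}\}\subseteq\cl(\{b_1,b_2,b_3\})$. Next I would use the triples $(b_l,b_{l+4},a_{l-3})$ for $l=4,\ldots,n+3$: since all of $b_4,\ldots,b_{n+7}$ are now present, each such triple fires and contributes $a_{l-3}$, so the whole affine base $a_1,\ldots,a_n$ enters the closure. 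Finally, since every triple of $\cX$ is also a triple of $\cX'$, the previous proposition gives $\cl_{\cX}(\{a_1,\ldots,a_n\})=X$, so $\cl_{\cX'}(\{b_1,b_2,b_3\})$ contains $X$ as well; combined with the $b$'s this exhausts $X'$, proving that $\{b_1,b_2,b_3\}$ is spreading.

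For minimality it suffices to show that none of $\{b_1,b_2\}$, $\{b_1,b_3\}$, $\{b_2,b_3\}$ spreads. The crucial structural fact is that each $b_i$ (for $i\in\{1,2,3\}$) lies in exactly one hyperplane $X_i$ and in no other, so no two of them share a hyperplane, while none of the added points $b_4,\ldots,b_{n+7}$ lies in any hyperplane at all; consequently no line of $\cX$ can ever fire during a spreading process started from such a pair, and only the explicitly added triples can act. I would then check directly that propagation stalls: $\{b_1,b_2\}$ produces only $b_4$ (via $k=1$) and then halts, since continuing would require one of the absent points $b_3,b_5$; symmetrically $\{b_2,b_3\}$ produces only $b_5$ and halts; and $\{b_1,b_3\}$ fires nothing at all, because $b_1$ and $b_3$ never occur together in a triple (their indices differ by $2$, which would force the invalid value $k=0$ in the chain, and no $a$-triple pairs two $b$'s at distance $2$). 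Each resulting closure is a proper subset of $\cX'$, and the singleton and empty subsets trivially do not spread, so $\{b_1,b_2,b_3\}$ is minimal.

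The routine-looking but genuinely delicate part is the index bookkeeping: one must verify that the chain induction closes at exactly the right endpoints, so that every $b_i$ up to $b_{n+7}$ is reached and every $a_j$ up to $a_n$ is produced, and that the three pairwise closures really stall rather than accidentally igniting a further triple. The hyperplane-membership observation is what makes the latter clean, since it rules out every $\cX$-line in one stroke and reduces the check to the finitely many added triples.
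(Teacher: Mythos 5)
Your proof is correct. The spreading half is essentially the paper's argument with the details filled in: you verify explicitly, by induction along the chain $(b_k,b_{k+1},b_{k+3})$, that all of $b_1,\dots,b_{n+7}$ and then (via the triples $(b_l,b_{l+4},a_{l-3})$) all of $a_1,\dots,a_n$ enter the closure, after which $\cl_{\cX}(\{a_1,\dots,a_n\})=X$ finishes the job; the index bookkeeping you worry about does check out ($k=m-2$ ranges over exactly $1,\dots,n+4$ as $m$ runs from $3$ to $n+6$). Where you genuinely diverge is the minimality half. You track the propagation from each of the three pairs separately, using the hyperplane-membership observation to exclude the lines of $\cX$ and then checking that the added triples stall. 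This works, but it is much more labour than needed: the paper instead invokes the general fact that in any partial Steiner triple system the closure of a $2$-set has size at most $3$ (the pair lies in at most one triple, and a full triple has empty neighbourhood since each of its pairs is already used up by that triple). That one-line observation makes \emph{every} $3$-element spreading set automatically minimal, with no case analysis and no reference to the specific construction; your route buys nothing extra here except a concrete description of the three small closures. If you keep your version, state the general principle anyway, since it is what the construction-independent minimality really rests on.
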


\begin{proof}
From the construction we get that $a_l,b_k\in\cl_{\cX'}(\{b_1,b_2,b_3\})$ for all $l=1,\ldots, n$ and $k=1,\ldots,n+7$. Thus $\cl_{\cX'}(\{a_1,\ldots,a_n\})\subset\cl_{\cX'}(\{b_1,b_2,b_3\})$. But $\cX\subset\cX'$ thus $X=\cl_{\cX}(\{a_1,\ldots,a_n\})\subset \cl_{\cX'}(\{a_1,\ldots,a_n\})$. So $\cl_{\cX'}(\{b_1,b_2,b_3\})=X'$ which means that the set $\{b_1,b_2,b_3\}$ is a spreading set. It must be also a minimal spreading set because the closure of a set of size $2$ in a partial Steiner triple system is either of size $2$ or of size $3$.
\end{proof}

\begin{prop}
The set $\{a_1,\ldots,a_n\}$ is a minimal spreading set of $\cX'$.
\end{prop}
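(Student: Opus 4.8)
The plan is to verify the two defining properties of a minimal spreading set separately: first that $\{a_1,\dots,a_n\}$ spreads to the whole of $\cX'$, and then that deleting any single $a_i$ destroys the spreading property. The first property is essentially a repetition of the argument already used for $\{b_1,b_2,b_3\}$. Since $\cX\subset\cX'$ and the closure operator is monotone, $X=\cl_{\cX}(\{a_1,\dots,a_n\})\subseteq\cl_{\cX'}(\{a_1,\dots,a_n\})$; in particular $b_1,b_2,b_3\in X$ lie in the closure. Feeding these into the added triples $(b_k,b_{k+1},b_{k+3})$ and inductively producing $b_4,b_5,\dots,b_{n+7}$ shows that $\cl_{\cX'}(\{a_1,\dots,a_n\})=X'$, so the set is spreading.

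For minimality I would prove the key structural fact that, for every $i$,
\[
\cl_{\cX}(\{a_j:j\neq i\})=X_i .
\]
Here $\{a_j:j\neq i\}$ is an affine base of the affine hyperplane $\cH_i\cong\AG(n-2,3)$, so inside $\cH_i$ (whose induced triples are all present in $\cX$) it is a spreading set and its $\cH_i$-closure is $X_i$; and because an affine subspace contains the whole line through any two of its points, no $\AG(n-1,3)$-line --- hence no triple of $\cX$ --- can have two points in $X_i$ and its third point outside, so $X_i$ is $\cX$-closed. Combining these, the $\cX$-closure of $\{a_j:j\neq i\}$ is exactly $X_i$, which does not contain $a_i$.

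It remains to check that passing from $\cX$ to $\cX'$ adds nothing to this closure, and this is the main obstacle. The crucial observation is that, by the choice of $b_1,b_2,b_3$ in $X_1\setminus\bigcup_{j\neq1}X_j$, $X_2\setminus\bigcup_{j\neq2}X_j$ and $X_3\setminus\bigcup_{j\neq3}X_j$ respectively, at most one of $b_1,b_2,b_3$ lies in $X_i$ (namely $b_i$ when $i\leq3$, and none when $i\geq4$), while the new points $b_4,\dots,b_{n+7}$ lie outside $X$ altogether. Every added triple of the first type $(b_k,b_{k+1},b_{k+3})$ consists solely of $b$-points, and the generation of any one of its vertices requires the other two to be available already; every added triple of the second type $(b_l,b_{l+4},a_{l-3})$ has both of its $b$-indices at least $4$, so firing it to reach $a_{l-3}$ again requires two of the new $b$-points. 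Since the closure starts from $X_i$ with at most a single $b$-point present and none of $b_1,b_2,b_3$ occurs in a second-type triple, no new $b$-point is ever produced, and consequently no second-type triple can ever fire to create an $a$-point. Hence $\cl_{\cX'}(\{a_j:j\neq i\})=X_i\not\ni a_i$, and as this holds for each $i$, the set $\{a_1,\dots,a_n\}$ is a minimal spreading set.
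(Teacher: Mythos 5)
Your proof is correct and takes essentially the same route as the paper's: spreading follows from monotonicity of the closure together with the fact that $\cl_{\cX'}(\{a_1,\ldots,a_n\})$ contains the spreading set $\{b_1,b_2,b_3\}$, and minimality follows because each $X_i$ is $\cX$-closed and remains closed in $\cX'$. Your step-by-step analysis of which added triples could fire is just a more explicit rendering of the paper's one-line observation that none of the added triplets has more than one point in $X_i$.
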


\begin{proof}
$\{a_1,\ldots,a_n\}$ is a spreading set because its closure contains the set $\{b_1,b_2,b_3\}$ which is a spreading set. It is also a minimal spreading set. 
Indeed, $\cl_{\cX'}(\{a_1,\ldots,a_{i-1},a_{i+1},\ldots,a_n\})=\cl_{\cX'}(X_i)=\cl_{\cX}(X_i)=X_i\neq X'$ for all $i$, since none of the triplets added  to $\cX$ has more than $1$ point from $X_i$. 
\end{proof}

We will use a theorem of Bryant and Horsley to show that not only a partial Steiner triple system exists with the required property but also a Steiner triple system.

\begin{theorem}[Bryant, Horsley \cite{Linderc}]
  Any partial Steiner triple system of order $u$ can be embedded in a Steiner triple system of order $v$ if $v\equiv1,3\pmod 6$ and  $v\geq 2u+1$.
\end{theorem}

The theorem above guarantees that there exists a Steiner triple system which contains $\cX'$. Let $\cS$ be an STS which contains $\cX'$ and is of the least possible size.

\begin{theorem}
  There exists a Steiner triple system which has both a minimal spreading set of size $3$ and $n$.
\end{theorem}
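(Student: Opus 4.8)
The plan is to leverage the three propositions already established in this section, which together show that the explicitly constructed partial Steiner triple system $\cX'$ has two minimal spreading sets of the desired sizes, namely $\{b_1,b_2,b_3\}$ of size $3$ and $\{a_1,\ldots,a_n\}$ of size $n$. The remaining task is purely to pass from the partial system $\cX'$ to a genuine Steiner triple system while preserving both of these properties. First I would invoke the theorem of Bryant and Horsley quoted immediately above to guarantee that $\cX'$, being a partial Steiner triple system of some finite order $u$, embeds into an honest $\STS$; among all such containing systems, let $\cS$ be one of minimum order, exactly as set up in the paragraph preceding the statement.

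The heart of the argument is to show that both minimal spreading sets survive the embedding. I would argue this via the minimality of $\cS$. The key point is that in $\cX'$ we already have $\cl_{\cX'}(\{b_1,b_2,b_3\})=X'$ and $\cl_{\cX'}(\{a_1,\ldots,a_n\})=X'$, where $X'$ is the full underlying point set of $\cX'$. Since closure can only grow when we add blocks, passing to $\cS$ gives $\cl_{\cS}(\{b_1,b_2,b_3\})\supseteq X'$ and likewise for the $a$-set. Now the closure of either set inside $\cS$ is itself a point set which, together with the blocks of $\cS$ it supports, forms a sub-$\STS$ of $\cS$ containing all of $\cX'$; by the minimality of $|\cS|$ this closure must be all of $\cS$. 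Hence both sets are spreading in $\cS$.

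It then remains to confirm that they are \emph{minimal} spreading sets in $\cS$, not merely spreading. For $\{b_1,b_2,b_3\}$ this is automatic, since any set of size $2$ has closure of size at most $3$ in any Steiner triple system, so no proper subset can spread. For $\{a_1,\ldots,a_n\}$ the minimality is more delicate: I would want to show that removing any single $a_i$ fails to spread in $\cS$. In $\cX'$ this held because $\cl_{\cX'}(\{a_1,\ldots,a_{i-1},a_{i+1},\ldots,a_n\})=X_i\neq X'$, and the extra blocks added to $\cX$ each meet $X_i$ in at most one point. The difficulty is that $\cS$ contains many new blocks absent from $\cX'$, and a priori these could allow the $(n-1)$-subset to spread after all.

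The main obstacle, therefore, is exactly this last point: minimality of the large spreading set is not preserved automatically by an arbitrary minimum embedding, and the Bryant--Horsley theorem gives no control over which blocks are added. I would address this by strengthening the choice of $\cS$ or by a more careful accounting: one natural route is to observe that the property we actually need is only the \emph{existence} of two minimal spreading sets of sizes $3$ and $n$, so it suffices to exhibit \emph{some} minimal spreading set of each size in $\cS$. Any spreading set contains a minimal spreading set as a subset, and minimizing the order of $\cS$ forces the closures computed from $\{b_1,b_2,b_3\}$ and from $\{a_1,\ldots,a_n\}$ to fill $\cS$; I would then extract a minimal spreading subset of $\{a_1,\ldots,a_n\}$ and verify, using that the $a_i$ were chosen as an affine base of $\AG(n-1,3)$ and that the gluing was performed on the hyperplanes $\cH_i$, that no proper subset of the $a$'s spreads even in the minimal embedding, so the minimal spreading subset we extract must in fact have full size $n$. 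Pinning down that the affine-base structure rules out premature spreading in $\cS$ is where the real work lies.
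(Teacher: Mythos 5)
Your setup matches the paper's: embed $\cX'$ into a minimum-order $\STS$ $\cS$ via Bryant--Horsley, use the minimality of $|\cS|$ to conclude that both $\{b_1,b_2,b_3\}$ and $\{a_1,\ldots,a_n\}$ spread in $\cS$, and dispose of the minimality of the size-$3$ set by the trivial bound on closures of pairs. But you stop exactly at the step you yourself flag as ``where the real work lies'': showing that no proper subset of $\{a_1,\ldots,a_n\}$ spreads in $\cS$ despite the uncontrolled blocks added by the embedding. That step is the actual content of the theorem, and leaving it as something to be ``verified'' later is a genuine gap.

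The missing observation is short. For any proper subset $A\subset\{a_1,\ldots,a_n\}$, the closure $\cl_{\cX}(A)$ is the affine subspace of $\AG(n-1,3)$ spanned by $A$, and this subspace is a \emph{complete} Steiner triple system on its own point set: every pair of its points already lies in a block of $\cX$ contained entirely within it. Since in a Steiner triple system each pair of points lies in exactly one block, no block of $\cS$ other than those inherited ones can contain two points of $\cl_{\cX}(A)$; hence $N_{\cS}(\cl_{\cX}(A))=\emptyset$ and $\cl_{\cS}(A)=\cl_{\cX}(A)\neq S$. This is precisely why the construction starts from an affine base: proper subsets of the base close up into proper affine subspaces, i.e.\ into sub-$\STS$s, and a sub-$\STS$ is automatically closed in \emph{any} ambient $\STS$, so no control over which blocks Bryant--Horsley adds is needed. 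With this one observation supplied, your argument coincides with the paper's proof.
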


\begin{proof}
We will show that $\cS$ is appropriate.

Both the sets $\{a_1,\ldots,a_n\}$ and $\{b_1,b_2,b_3\}$ are spreading sets in $\cS$ because their closure is $X'$ in $\cX'$ so in $\cS$ their closure is the same as of $\cX'$ but $\cl_{\cS}(X')=S$ because $\cS$ is an STS contains $\cX'$ of  minimal size.

The set $\{b_1,b_2,b_3\}$ is a minimal spreading set because its each subset has a closure of size at most $3$.

The set $\{a_1,a_2,\ldots,a_n\}$ is also a minimal spreading set. Let $A\subset\{a_1,a_2,\ldots,a_n\}$. The closure $\cl_{\cS}(A)$ of $A$ is the same as the closure $\cl_{\cX}(A)$ because it is a sub-STS since it was an affine subspace in the original affine space thus during the extension of $\cX$ we could not add any triplet which has at least two points in $\cl_{\cX}(A)$.
\end{proof}



\section{Concluding remarks and open problems}

We begin this section with some connections to saturating sets of projective spaces.
\begin{nota} The minimum size of a saturating set in $\PG(n,q)$ is denoted by $\N(n,q)$.
\end{nota}

The trivial lower bound on $\N(n,q)$ is due to Lunelli and Sce, and reads as follows \cite{Ughi}.

\begin{prop}
$$(q-1)\binom{\N(n,q)}{2}+\N(n,q)\geq \frac{q^{n+1}-1}{q-1},$$
thus $\N(n,q)> 2q^{\frac{n-1}{2}}$.
\end{prop}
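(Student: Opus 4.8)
The plan is to prove the Lunelli--Sce inequality by a straightforward counting argument and then derive the asymptotic bound $\N(n,q) > 2q^{(n-1)/2}$ as an algebraic consequence. Let $U$ be a saturating set in $\PG(n,q)$ with $|U| = \N(n,q) =: s$. By the definition of a saturating set (equivalently, Definition~\ref{quasi} specialised to projective space), every point of $\PG(n,q)$ lies either in $U$ itself or on a line joining two distinct points of $U$. My first step would be to count how many points can possibly be covered in this way.

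The key counting step: there are exactly $\binom{s}{2}$ pairs of points of $U$, and each such pair determines a unique line. A line in $\PG(n,q)$ contains $q+1$ points, of which at least two belong to $U$; hence it contributes at most $q-1$ points of $\PG(n,q)\setminus U$. Therefore the number of points covered by secants of $U$ is at most $(q-1)\binom{s}{2}$, and adding the $s$ points of $U$ itself, the total number of points that can be saturated is at most $(q-1)\binom{s}{2} + s$. Since $U$ is saturating, this must be at least the total number of points of $\PG(n,q)$, namely $\frac{q^{n+1}-1}{q-1}$. This yields the stated inequality
$$(q-1)\binom{\N(n,q)}{2}+\N(n,q)\geq \frac{q^{n+1}-1}{q-1}.$$

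For the asymptotic consequence, I would expand the left-hand side: $(q-1)\binom{s}{2} + s = (q-1)\frac{s(s-1)}{2} + s < \frac{q-1}{2}s^2$ for the purpose of a clean lower bound, while the right-hand side satisfies $\frac{q^{n+1}-1}{q-1} > q^n$. Combining, $\frac{q-1}{2}s^2 > q^n$ gives roughly $s^2 > \frac{2q^n}{q-1} > 2q^{n-1}$, whence $s > \sqrt{2}\, q^{(n-1)/2}$; a slightly more careful bookkeeping of the lower-order terms sharpens the constant to yield $\N(n,q) > 2q^{(n-1)/2}$. I would be careful to track the crude estimates here so that the factor of $2$ (rather than $\sqrt 2$) comes out, which requires using the full $\frac{q^{n+1}-1}{q-1}$ rather than the weaker $q^n$, and handling the linear term $s$ appropriately.

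The argument is essentially elementary, so there is no serious conceptual obstacle; the only delicate point is the constant in the asymptotic bound. The main care needed is in the final algebraic manipulation: one must verify that the inequality $(q-1)\binom{s}{2}+s \geq \frac{q^{n+1}-1}{q-1}$ really does force $s > 2q^{(n-1)/2}$ and not merely $s > \sqrt 2\, q^{(n-1)/2}$, which means I cannot discard lower-order terms too eagerly and should solve the resulting quadratic in $s$ explicitly (or bound it sharply) to extract the correct leading constant.
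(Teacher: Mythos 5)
The paper gives no proof of this proposition: it is quoted as the classical Lunelli--Sce bound with a reference to Ughi, so only the statement itself is available for comparison. Your double-counting argument for the displayed inequality is the standard proof and is correct: each of the $\binom{s}{2}$ pairs of points of $U$ spans a line carrying at most $q-1$ points outside $U$, distinct pairs spanning the same line only worsens the over-count, and every point of $\PG(n,q)$ must be accounted for, giving $(q-1)\binom{s}{2}+s\ge\frac{q^{n+1}-1}{q-1}$.

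The gap is in the passage from this inequality to $\N(n,q)>2q^{(n-1)/2}$. You derive $s>\sqrt2\,q^{(n-1)/2}$ and then promise that ``more careful bookkeeping'' upgrades the constant to $2$; it cannot, for general $q$. Solving the quadratic exactly (for $q\ge3$, where the linear term has the helpful sign) yields
$$s\;\ge\;\frac{\sqrt{2(q^{n+1}-1)}}{q-1},\qquad\text{while}\qquad \frac{\sqrt{2q^{n+1}}}{q-1}=\frac{\sqrt2\,q}{q-1}\,q^{\frac{n-1}{2}},$$
so the best constant this method can produce is essentially $\frac{\sqrt2\,q}{q-1}$, which exceeds $2$ precisely when $q<2+\sqrt2$, i.e.\ for $q\in\{2,3\}$. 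For $q\ge4$ the Lunelli--Sce inequality simply does not force the claimed bound: in $\PG(3,4)$ it reads $3\binom{s}{2}+s\ge85$, which is already satisfied by $s=8=2\cdot4^{(3-1)/2}$, so it cannot imply $s>8$. Hence either the conclusion should carry the constant $\sqrt2$ (the form in which the trivial bound is usually quoted), or it must be restricted to $q\le3$ --- which is consistent with the paper, since Corollary~\ref{lower_sat} refines it only for $q=2,3$. A second, smaller slip: your intermediate estimate $(q-1)\binom{s}{2}+s<\frac{q-1}{2}s^2$ is false for $q=2$ and an equality for $q=3$, i.e.\ it fails in exactly the two cases where the constant $2$ is attainable; there you should instead solve $\frac{s^2+s}{2}\ge2^{n+1}-1$ and $s^2\ge\frac{3^{n+1}-1}{2}$ directly, which is precisely what Corollary~\ref{lower_sat} records.
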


For smaller values of $q$, the bound  $N(n,q)> 2q^{\frac{n-1}{2}}$ can be refined. The bound in turn implies the next statement.

\begin{cor}\label{lower_sat}
 $$\N(n,2)\geq \sqrt{2^{n+2}-2}-0.5, \ \ \ \N(n,3)\geq \sqrt{\frac{3^{n+1}-1}{2}}.$$
\end{cor}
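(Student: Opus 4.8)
The plan is to derive Corollary~\ref{lower_sat} directly from the Lunelli--Sce inequality stated in the preceding Proposition, simply by specializing $q=2$ and $q=3$ and solving the resulting quadratic inequality for $\N(n,q)$. First I would substitute $q=2$ into $(q-1)\binom{\N(n,q)}{2}+\N(n,q)\geq \frac{q^{n+1}-1}{q-1}$. With $q-1=1$ this becomes $\binom{\N(n,2)}{2}+\N(n,2)\geq 2^{n+1}-1$, and since $\binom{s}{2}+s=\tfrac{s(s-1)}{2}+s=\tfrac{s(s+1)}{2}$ for $s=\N(n,2)$, the inequality reads $\N(n,2)(\N(n,2)+1)\geq 2(2^{n+1}-1)=2^{n+2}-2$.

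The next step is to treat this as a quadratic condition in $s=\N(n,2)$. Writing $s^2+s-(2^{n+2}-2)\geq 0$ and applying the quadratic formula, the positive root is $s=\tfrac{-1+\sqrt{1+4(2^{n+2}-2)}}{2}$. One checks $1+4(2^{n+2}-2)=2^{n+4}-7$, and I would then bound $\sqrt{2^{n+4}-7}$ from below. In fact the clean way to reach the stated form is to complete the square: from $s(s+1)\geq 2^{n+2}-2$ one has $(s+\tfrac12)^2=s^2+s+\tfrac14\geq 2^{n+2}-2+\tfrac14$, so $s+\tfrac12\geq\sqrt{2^{n+2}-\tfrac74}$, giving $\N(n,2)\geq \sqrt{2^{n+2}-\tfrac74}-\tfrac12$. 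Since $2^{n+2}-\tfrac74\geq 2^{n+2}-2$ fails in the wrong direction, I would instead note that the clean bound $\N(n,2)\geq \sqrt{2^{n+2}-2}-0.5$ in the statement follows from the slightly weaker estimate $(s+\tfrac12)^2\geq 2^{n+2}-2$, i.e. by discarding the favourable $+\tfrac14$ term; this is the standard harmless rounding used to present the bound in a memorable closed form.

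For the second inequality I would repeat the computation with $q=3$. Here $q-1=2$, so the Lunelli--Sce bound becomes $2\binom{\N(n,3)}{2}+\N(n,3)\geq \frac{3^{n+1}-1}{2}$. Since $2\binom{s}{2}+s=s(s-1)+s=s^2$ with $s=\N(n,3)$, the left-hand side collapses neatly to $s^2$, yielding $\N(n,3)^2\geq \frac{3^{n+1}-1}{2}$ and hence $\N(n,3)\geq \sqrt{\frac{3^{n+1}-1}{2}}$ upon taking square roots. This case is in fact cleaner than the $q=2$ case precisely because the quadratic in $s$ has no linear term.

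I do not expect any genuine obstacle here, as the corollary is an elementary algebraic consequence of an already-stated inequality; the only point requiring mild care is the handling of the additive constants in the $q=2$ case, where completing the square produces $-\tfrac74$ rather than the $-2$ appearing inside the radical of the stated bound. The resolution is to observe that the displayed bound is the deliberately rounded (and hence weaker, thus still valid) form $\sqrt{2^{n+2}-2}-0.5$, which follows from the exact root by monotonicity of the square-root function together with dropping the nonnegative $\tfrac14$ correction. I would state this rounding explicitly so that no reader suspects an error in the constant.
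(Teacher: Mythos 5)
Your proposal is correct and matches the paper's (implicit) argument exactly: the corollary is obtained by specializing the Lunelli--Sce inequality to $q=2$ and $q=3$, completing the square in the first case and noting that $2\binom{s}{2}+s=s^2$ in the second. The only blemish is the mid-paragraph remark that ``$2^{n+2}-\tfrac74\geq 2^{n+2}-2$ fails in the wrong direction'' --- that inequality in fact holds and points the right way, since $(s+\tfrac12)^2=s(s+1)+\tfrac14\geq 2^{n+2}-2$ immediately gives $s\geq\sqrt{2^{n+2}-2}-\tfrac12$; your subsequent resolution is the correct one.
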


There are no known matching or asymptotically matching upper bounds in general. We refer  the interested reader to the recent paper \cite{Lins} concerning small constructions.

We point out a method to improve slightly the lower bound.

The core of the idea relies on an extremal combinatorial problem which is interesting on its own.
\begin{problem}\label{elteres}
Let us take a point set $U$ in a projective space $PG(n,q)$ of cardinality $m$. Determine $$\max_{|U|=m} \min_{H \subset PG(n,q) \mbox{\ is \ a \ hyperplane}} |U\cap H|$$ and  $$\min_{|U|=m} \max_{H \subset PG(n,q) \mbox{\ is \ a \ hyperplane}}|U\cap H|.$$
\end{problem}

Concerning Problem \ref{elteres}, Proposition \ref{szoras} implies that  there exists a hyperplane  which intersects the set $U$ in either significantly less point than expected, or in significantly more points. Note that in general we cannot expect to obtain similar bounds for intersections with many point or intersections with few points separately. Indeed, if $U$ is the point set of a hyperplane or a point set of the complement of a hyperplane of $\PG(n,2)$, then the intersection sizes are $|U|$ or  $\frac{|U|-1}{2}$ and $0$ or  $\frac{|U|+1}{2}$, respectively, hence the minimum, resp. maximum size only differs from expected number by less than $1$.

Note that these questions are strongly connected to determine the minimum size of a $t$-fold blocking set in projective space $\PG(n,q)$. Once one have a lower bound on a $t$-fold blocking set which exceeds $|U|=m$, we get that there should be a hyperplane on which there are at most $t-1$ points of $U$. On results concerning $t$-fold blocking sets of projective spaces we refer to \cite{Barat, Sziklai}.

To present the link to saturating sets in projective spaces, consider the case $q=2$. (If $q>2$ one can follow the same lines but the calculation is slightly more involved.) 
Take a  saturating set $S$ of $m$ points and hyperplane on which the number $m_1$ of points $v\in S$ is either small or large compared to $m/2$. Each point not on the hyperplane should be saturated, hence it is either a point of $S$ or on a line determined by two point from $S$: one from the hyperplane and one from its complement. Thus we obtain a  necessary condition $m-m_1+m_1(m-m_1)\geq 2^n-1$.
Once we have a result stating that for each point set of size $m$ we can find a hyperplane containing $m_1$ points where $|m/2-m_1|$ is large, we in turn get a lower bound on $m$. An instance for an application is the result of Proposition \ref{szoras}, which provides a slightly better bound than the trivial one from Corollary \ref{lower_sat}.

Finally we mention an open problem in connection with Proposition \ref{almost}. As we have seen  before, large minimal spreading sets could also be constructed via the perturbation of the structure of a projective geometry over $\mathbb{F}_2$, however all these triple systems are defined on a point set of size one less than a power of $2$. 

\begin{problem} Does there exist a Steiner triple system $\cS$ of order $n$ such that the minimal spreading set in $\cS$ is larger than $\log_3(n)+1$ and the order of the system is not of form $n= 2^t-1$?
\end{problem}


\end{document}